\newtheorem{theorem}{Theorem}[section]
\newtheorem{lemma}[theorem]{Lemma}
\newtheorem{question}[theorem]{Question}
\theoremstyle{definition}
\newtheorem{remark}[theorem]{Remark}
\newcommand{\lk}{\ell k}
\newcommand{\frL}{\ensuremath{\mathfrak{L}}}
\newcommand{\tb}{\ensuremath{{\mbox{\tt tb}}}}
\newcommand{\rot}{\ensuremath{{\mbox{\tt rot}}}}
\begin{document}

\title{On Overtwisted Contact Surgeries} 

\author{S\.{i}nem Onaran}

\address{Department of Mathematics, Hacettepe University, 06800 Beytepe-Ankara, Turkey}
\email{sonaran@hacettepe.edu.tr}
  \subjclass{57M25, 57R65}
\keywords{contact structure, contact surgery, Legendrian knot.}

\begin{abstract}
In this note, we obtain a new result concluding when contact $(+1/n)$--surgery is overtwisted. We give a counterexample to a conjecture by James Conway on overtwistedness of manifolds obtained by contact surgery. We list some problems related to the contact surgery.
\end{abstract}
\maketitle
 \setcounter{section}{0}

\section{Introduction}

Contact surgeries have been an essential tool for a long time to study contact $3$--manifolds. This paper is concerned with the behaviour of contact structures under contact surgeries, in particular under contact $(+1/n)$--surgeries. Such surgeries along stabilized Legendrian knots are studied in \cite{Ozb05}, in particular the $(+1)$-contact surgery case is studied in \cite{LS06} and later in \cite{Et1}.

Let $L$ be a nullhomologous oriented Legendrian knot in a contact $3$-manifold $(M, \xi)$. Throughout the paper let $\tb(L)$ denote its Thurston-Bennequin invariant, $\rot(L)$ its rotation number and $\chi(L)$ the Euler characteristic of its Seifert surface.

\begin{theorem} Let $L$ be a nullhomologous oriented Legendrian knot in a tight contact $3$-manifold. If $\tb(L) < 0$ and $\rot(L) > - \chi(L)$, then for any positive integer $n$, contact $(+1/n)$--surgery along $L$ is overtwisted.
\label{thm1}
\end{theorem}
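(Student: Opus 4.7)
The plan is to reduce the theorem to the case $n=1$ of a single contact $(+1)$-surgery, and then to extract an overtwisted disk via convex surface theory on a Seifert surface of $L$. For the reduction, I would use the standard fact that contact $(+1/n)$-surgery along $L$ is equivalent to performing contact $(+1)$-surgery on $n$ parallel Legendrian pushoffs of $L$. By nesting these pushoffs close to $L$, the overtwisted disk produced by the first surgery can be arranged to sit inside the glued-in solid torus, disjoint from the surgery regions of the remaining pushoffs; hence overtwistedness persists and it suffices to treat $n=1$.

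For the case $n=1$, I would make a Seifert surface $\Sigma$ of $L$ convex. Its dividing set $\Gamma$ has $|\tb(L)|$ arcs meeting $L$, and tightness of the ambient contact structure prevents any closed component of $\Gamma$ from bounding a disk in $\Sigma$ (Giroux's criterion). Decomposing $\Sigma\setminus\Gamma=\Sigma_+\sqcup\Sigma_-$, the identities
\[
\chi(\Sigma_+)+\chi(\Sigma_-)=\chi(\Sigma)+|\tb(L)|,\qquad \chi(\Sigma_+)-\chi(\Sigma_-)=\rot(L)
\]
(with signs chosen so that the positive region corresponds to positive rotation, which is consistent since $\rot(L)\ge 0$ follows from the hypothesis) give
\[
\chi(\Sigma_+)=\tfrac{1}{2}\bigl(\chi(\Sigma)+|\tb(L)|+\rot(L)\bigr).
\]
The hypothesis $\rot(L)>-\chi(L)$ combined with $\tb(L)<0$ then forces $\chi(\Sigma_+)>|\tb(L)|/2\ge 1/2$, so $\chi(\Sigma_+)\ge 1$ and $\Sigma_+$ must contain a disk component $D$. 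Tightness prohibits $\partial D\subset\Gamma$, so $\partial D$ contains at least one arc of $L$; thus $D$ is a bypass half-disk attached to $L$.

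The final step is to show that this bypass becomes an overtwisted disk after contact $(+1)$-surgery along $L$: the $L$-arcs in $\partial D$ close up, via meridional arcs of the new solid torus, to form the boundary of a disk whose Legendrian realization has Thurston--Bennequin invariant zero. The main obstacle I expect is precisely this framing computation, i.e.\ tracking how $D$ and its boundary transform through the surgery. The $(+1)$-twist in the contact framing relative to the contact framing of $L$ is exactly what is needed to turn $\partial D$ into such a Legendrian unknot, and confirming this should follow from a local-model analysis in the surgery solid torus, in the spirit of the arguments of \cite{Ozb05} and \cite{LS06} for stabilized Legendrian knots.
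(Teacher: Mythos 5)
Your route is genuinely different from the paper's. The paper converts the $(+1/n)$-surgery into $n$ contact $(+1)$-surgeries on pushoffs, computes the rational invariants $\tb_{\mathbb{Q}}(L^*)$ and $\rot_{\mathbb{Q}}(L^*)$ of the surgery dual knot from the linking matrix, and checks that they violate the rational Bennequin inequality of \cite{BE}; this handles all $n$ uniformly with one algebraic computation. Your plan --- extract a bypass from a convex Seifert surface and convert it into an overtwisted disk after surgery --- is essentially the (unpublished) Etnyre--Vela-Vick observation that the hypotheses force $L$ to destabilize, combined with the known fact that contact $(+1/n)$-surgery on a stabilized Legendrian knot is overtwisted \cite{Ozb05}. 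That route can be made to work, but as written it has two genuine gaps.

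First, the reduction to $n=1$ is justified incorrectly. The glued-in solid torus of a contact surgery carries a tight contact structure (it is a standard neighborhood of the dual Legendrian knot), so no overtwisted disk can sit inside it; any overtwisted disk must meet the complement, where $L_2,\dots,L_n$ live. Nor does general position separate the disk from the remaining pushoffs: a curve generically meets an embedded surface in isolated points, and indeed the pushoffs have linking number $\tb\neq 0$ with $L$ and therefore genuinely intersect the Seifert surface from which your disk is built. The correct fix is the argument used for Theorem~\ref{thm2}: once one $(+1)$-surgery gives an overtwisted manifold, the remaining $(+1)$-surgeries stay overtwisted, since otherwise inverting them by $(-1)$-surgeries would contradict \cite{Wand}. (Alternatively, avoid the reduction entirely by quoting \cite{Ozb05} for all $n$ at once.) Second, and more seriously, the step you yourself flag as ``the main obstacle'' is the entire content of the $n=1$ case and is left unproven: you never actually show that the half-disk, capped off through the surgery torus, is an overtwisted disk. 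There is also a smaller issue upstream: a disk component of $\Sigma_+$ with $\chi=1$ need not be a bypass half-disk, since its boundary may alternate between several arcs of $\Gamma$ and of $L$; one needs an outermost-arc argument to produce a boundary-parallel dividing arc, and one should address the case where the resulting bypass lies in $\Sigma_-$ (harmless for the surgery conclusion, but it changes the sign of the destabilization). Until the bypass-to-overtwisted-disk step is carried out, or replaced by a citation to \cite{Ozb05} or \cite{Et1} for surgery on stabilized knots, the proof is incomplete.
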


In paticular, If $\tb(L) < 0$ and $\rot(L) > - \chi(L)$, then contact $(+1)$--surgery along $L$ is overtwisted. Note that $(+1)$--surgery in a tight contact $3$-manifold is not necessarily overtwisted. For example, a single contact $(+1)$--surgery in the tight $3$-sphere along the $\tb=-1$ Legendrian unknot yields the tight and Stein fillable contact structure on $S^1 \times S^2$, \cite{DGS}. 

We want to remark that John Etnyre informed us that in an unpublished work he and Vela-Vick showed that under the hypothesis of Theorem ~ \ref{thm1} the Legendrian knot destabilizes.

In overtwisted contact $3$-manifolds we have:
\begin{theorem} Let $L$ be a nullhomologous Legendrian knot in an overtwisted contact $3$-manifold. For any positive integer $n$, contact $(+1/n)$--surgery along $L$ is always overtwisted.
\label{thm2} 
\end{theorem}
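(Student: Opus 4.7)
My plan is to reduce to the case of a single $(+1)$-contact surgery and then argue by contradiction, using the Ding--Geiges cancellation lemma combined with Wand's theorem that Legendrian surgery preserves tightness.

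By the Ding--Geiges algorithm for rational contact surgeries, contact $(+1/n)$-surgery along $L$ is equivalent to $n$ successive contact $(+1)$-surgeries performed along Legendrian pushoffs of $L$. It therefore suffices to establish the case $n=1$: a single contact $(+1)$-surgery along any nullhomologous Legendrian knot $L$ in an overtwisted contact $3$-manifold $(M,\xi)$ yields an overtwisted contact manifold. The general case then follows by iterating this base result, since after one $(+1)$-surgery we again sit inside an overtwisted contact manifold and can apply the base case to the next pushoff.

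For the base case, suppose for contradiction that contact $(+1)$-surgery along $L$ produces a tight contact manifold $(M_1,\xi_1)$. Let $L'$ be a Legendrian pushoff of $L$; since $L'$ is disjoint from $L$, it persists as a Legendrian knot in $(M_1,\xi_1)$. The Ding--Geiges cancellation lemma then asserts that contact $(-1)$-surgery along $L'$ in $(M_1,\xi_1)$ recovers the original contact manifold $(M,\xi)$. But contact $(-1)$-surgery is precisely Legendrian surgery, and Wand's theorem guarantees that Legendrian surgery preserves tightness. Consequently $(M,\xi)$ would have to be tight, contradicting the overtwistedness hypothesis. Hence $\xi_1$ must be overtwisted.

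The main obstacle in this plan is the reliance on Wand's deep tightness-preservation theorem. A more elementary route would try to exhibit an overtwisted disk in the surgered manifold by hand: if $L$ were \emph{loose} in $(M,\xi)$---that is, if there were an overtwisted disk in $M\setminus L$---then the disk would lie outside the support of the surgery and immediately witness overtwistedness of $\xi_1$. The real difficulty is the non-loose case, where every overtwisted disk meets $L$, and it is precisely this case that compels the use of the cancellation-plus-Wand argument above.
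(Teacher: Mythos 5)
Your proposal is correct and follows essentially the same route as the paper: reduce $(+1/n)$-surgery to $n$ successive $(+1)$-surgeries via the Ding--Geiges--Stipsicz algorithm, then show a single $(+1)$-surgery on an overtwisted manifold stays overtwisted by combining the cancellation lemma with Wand's theorem. The only cosmetic difference is that the paper phrases the cancelling $(-1)$-surgery as being along the surgery dual knot rather than along a pushoff, which amounts to the same cancellation.
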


Conjecture 6.13 from \cite{JC} states that if $L$ is a nullhomologous Legendrian knot with $\tb(L) \leq -2$, then contact $(+n)$--surgery on $L$ is overtwisted, for any positive integer $n < |\tb(L)|$. We give a negative answer to this conjecture by constructing a knot with $\tb = -3$ where $(+n)$--surgery along the knot is always tight for any integer $n \geq 2$. 

\begin{theorem} There exists nullhomologous Legendrian knots $L$ in a tight contact $3$--manifold with $\tb(L) \leq -2$ where contact $(+n)$--surgery on $L$ is tight, for any positive integer $n < |\tb(L)|$.
\label{thmcountereg}
\end{theorem}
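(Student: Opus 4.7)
The plan is to exhibit an explicit Legendrian knot $L$ with $\tb(L)=-3$, sitting in some tight contact $3$--manifold $(M,\xi)$, for which contact $(+n)$--surgery is tight for every integer $n\geq 2$; a single such example suffices to refute Conjecture 6.13 of \cite{JC}. By Theorem~\ref{thm1} any candidate must satisfy $\rot(L)\leq -\chi(L)$, which, combined with the Bennequin bound, confines the search to knot types with sufficiently large Seifert genus, possibly in a Stein fillable contact manifold more complicated than the standard tight $3$--sphere.

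The first step is to select and describe the knot by a front (or surgery) diagram and to compute its Legendrian invariants $\tb(L)$ and $\rot(L)$. The second step is to rewrite contact $(+n)$--surgery using the Ding--Geiges calculus: as a single contact $(+1)$--surgery on $L$ followed by $n-1$ contact $(-1)$--surgeries on Legendrian pushoffs with prescribed stabilizations. Because contact $(-1)$--surgeries correspond to Weinstein $2$--handle attachments, they preserve Stein fillability, and therefore tightness and non--vanishing of the Ozsv\'ath--Szab\'o contact invariant. The problem thus reduces to verifying that the single $(+1)$--surgered manifold is tight, and ideally Stein fillable.

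For this tightness verification I would either recognize the $(+1)$--surgered contact manifold via Kirby moves as the boundary of an explicit Stein surface, or prove non--vanishing of the contact invariant $c(\xi')$ by identifying $(M',\xi')$ with a known tight contact manifold or by using the naturality of $c$ under Legendrian surgery. Once this base case is secured, the $n-1$ Weinstein $2$--handles added by the subsequent $(-1)$--surgeries produce a tight contact manifold for every $n\geq 2$, completing the argument.

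The main obstacle will be the construction of the example itself: the hypothesis of Theorem~\ref{thm1} is sharp, so $L$ must sit at the boundary case $\rot(L)\leq -\chi(L)$, and one still needs to \emph{prove} — not merely fail to rule out — the tightness of the $(+1)$--surgery. This step typically demands a concrete Stein filling or a nontrivial contact invariant calculation, and it is where the bulk of the work of the theorem is concentrated.
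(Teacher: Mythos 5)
Your reduction is exactly the one the paper uses: decompose contact $(+n)$--surgery via the Ding--Geiges--Stipsicz algorithm \cite{DGS} into a single contact $(+1)$--surgery followed by contact $(-1)$--surgeries on pushoffs, invoke Wand's theorem \cite{Wand} (or Weinstein handle attachment) to see that the negative surgeries preserve tightness, and thereby reduce everything to tightness of the $(+1)$--surgery; this is precisely Lemma~\ref{ntight} of the paper. Your observation that Theorem~\ref{thm1} forces any candidate to satisfy $\rot(L)\leq -\chi(L)$ is also correct and consistent with the paper. However, the theorem is an existence statement, and your proposal stops exactly where the content begins: you never produce the knot $L$, and you yourself flag the construction as ``the main obstacle.'' Without an explicit example together with a verified computation of $\tb(L)\leq -2$ and a proof that its $(+1)$--surgery is tight, there is no proof.

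It is worth seeing how the paper discharges this step, because it sidesteps the difficulty you anticipate (large Seifert genus in $S^3$). The ambient manifold $(M,\xi)$ is itself presented by a contact surgery diagram --- a $(+1)$--framed standard Legendrian unknot together with a $(-1)$--framed curve --- and $L$ is taken to be a Legendrian unknot with $\tb_0=-1$ in that diagram (Figure~\ref{counterexample}). Tightness of the $(+1)$--surgery on $L$ is then immediate: the total diagram is one $(+1)$--surgery on a standard Legendrian unknot (yielding the Stein fillable $S^1\times S^2$) followed by Weinstein $2$--handles, hence Stein fillable. The point is that $\tb(L)$ is computed \emph{in the surgered manifold} via the linking-matrix correction term $\tb(L)=\tb_0+\det\mathbf{M_0}/\det\mathbf{M}=-1+2/(-1)=-3$, so the condition $\tb(L)\leq -2$ comes for free from the ambient surgery rather than from a genuinely complicated knot type in $(S^3,\xi_{std})$ with large genus. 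If you want to complete your proposal along your own lines, you would either need to reproduce such a surgered-manifold example, or (as the paper's remark notes) find a knot in $(S^3,\xi_{std})$ with $\tb\leq -2$ whose $(+1)$--surgery you can certify as tight --- a nontrivial contact-invariant computation that your outline defers rather than performs.
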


Before giving the proof of theorems, we list some open problems for the reader here.

Recently, one of the fundamental problems of contact geometry has been solved by Wand, \cite{Wand}. He showed that Legendrian surgery (i.e. contact $(-1)$--surgery) preserves tightness but we are still left with a fundamental question:

\begin{question} \cite{Et1} When does $(+1)$--surgery preserve tightness?
\label{Q1.4}
\end{question}

The next question rephrases Question~\ref{Q1.4} in terms of the surgery dual knot.

\begin{question} When does $(-1)$--surgery in  overtwisted contact $3$--manifolds result in tight contact $3$--manifolds? \label{Q1.5}
\end{question}

In \cite{LOSS}, Lisca-Ozsv\'{a}th-Stipsicz-Szab\'{o} define an invariant $\frL(L)$ in the knot Floer homology group $HFK^{-}(−M,L)$ of an oriented, nullhomologous Legendrian knot $L$ in a contact $3$-manifold $(M, \xi)$. It would be interesting to know:  

\begin{question} Let $L$ be a nullhomologous oriented  Legendrian knot with $\frL(L) \neq 0$ in a tight contact $3$--manifold. Is $(+n)$--surgery on $L$ always tight?
\end{question}

A Legendrian knot in an overtwisted contact $3$--manifold with a tight complement is called \textit{nonloose} or \textit{exceptional}. 

\begin{question} Let $L$ be a nullhomologous oriented Legendrian knot with $\frL(L) \neq 0$ in an overtwisted contact $3$--manifold. Is $(-n)$--surgery on $L$ always tight?
\end{question}

We assume that the reader is familiar with the elements of contact topology and we refer the reader to \cite{Gei}, \cite{Et}, \cite{Et2} for all the necessary background; for fundamentals of contact structures and Legendrian knots. 
%------------------------------------------------------------------------------------------------
\section{Proof of Theorems}
\begin{proof}[Proof of Theorem ~ \ref{thm1}] Let $L$ be a nullhomologous oriented Legendrian knot with $\tb(L) := \tb < 0$, $\rot(L):=\rot  > - \chi(L)$. By an algorithm turning a rational contact surgery into a sequence of contact $\pm(1)$-surgeries in \cite{DGS}, contact $(+1/n)$--surgery along $L$ is equivalent to contact $(+1)$--surgeries along $n$ successive push-offs $L_1$, $L_2$,$\ldots$ , $L_n$ of $L$. Let $L'$ be $(n+1)$st push-off of $L$. Denote the surgery dual knot, the image of $L'$, in the surgered manifold as $L^*$. We will show that the surgery dual knot $L^*$ violates a generalization of Bennequin's inequality, Theorem 2.1 of \cite{BE}, which holds only for knots in tight contact $3$-manifolds. To do so, we compute the rational Thurston-Bennequin invariant $\tb_\mathbb{Q}(L^*)$ and the rational rotation number $\rot_\mathbb{Q}(L^*)$ of $L^*$.

Since $(+1/n)$--surgery on $L$ is a topological $(n\tb+1)/n$--surgery on $L$, the homological order $r$ of $L^*$ is $|n\tb+1|$. Note that if $\Sigma$ is a rational minimal genus Seifert surface of $L^*$, then topologically it is the image of a minimal genus Seifert surface of $L$ and hence $\chi(\Sigma) = \chi(L)$. Note also that $\lk(L',L) = \lk(L', L_i) = \tb$ for $i = 1, \ldots , n$.

Following Lemma $6.4$ of \cite{JC} which extends Lemma $2$ of \cite{GO13}, Lemma $6.6$ of \cite{LOSS}, cf. \cite{DM}, to more general contact 3-manifolds, the linking matrix $\mathbf{M}$ is the $(n \times n)$ matrix

\begin{displaymath}
\mathbf{M} =
\left( \begin{array}{ccccc}
\tb+1 & \tb & \tb & \ldots & \tb \\
\tb & \tb+1 & \tb & \ldots & \tb \\
\vdots & \tb & \tb+1 & \ldots & \vdots \\
\tb & \vdots & \vdots & \ddots & \tb \\
\tb & \tb & \ldots & \tb & \tb+1
\end{array} \right),
\end{displaymath} 
and $\mbox{\tt det}(\mathbf{M}) = n\tb + 1$. The extended matrix $\mathbf{M_0}$ is the $(n+1) \times (n+1)$ matrix $
\mathbf{M_0} =
\left( \begin{array}{cc}
0 & \tb \\
\tb & \mathbf{M}
\end{array} \right)$, and $\mbox{\tt det}(\mathbf{M_0}) = -n\tb^2$. Then we may compute
\begin{center}
$\displaystyle \tb_\mathbb{Q}(L^*) = \tb(L') + \frac{\mbox{\tt det}\mathbf{M_0}}{\mbox{\tt det}\mathbf{M}} = \tb + \frac{-n\tb^2}{n\tb+1} = \frac{\tb}{n\tb+1}$, and 

\begin{equation} \displaystyle
\begin{array}{ccl}
 \rot_\mathbb{Q}(L^*) &=&  \rot(L') - \Big \langle  \left( \begin{array}{cccc}
\rot(L_1) \\
\rot(L_2) \\
\vdots \\
\rot(L_n)

\end{array} \right), \mathbf{M}^{-1} \left( \begin{array}{cccc}
\lk(L', L_1) \\
\lk(L', L_2) \\
\vdots \\
\lk(L', L_n)

\end{array} \right)\Big \rangle \nonumber \\ &=& \bigskip \rot - \Big \langle \left( \begin{array}{cccc}
\rot \\
\rot \\
\vdots \\
\rot

\end{array} \right), \mathbf{M}^{-1} \left( \begin{array}{cccc}
\tb \\
\tb \\
\vdots \\
\tb

\end{array} \right)\Big \rangle \\ &=& \bigskip \rot - \Big \langle \left( \begin{array}{cccc}
\rot \\
\rot \\
\vdots \\
\rot

\end{array} \right), \left( \begin{array}{cccc}
\tb/(n\tb + 1) \\
\tb/(n\tb + 1) \\
\vdots \\
\tb/(n\tb+1)

\end{array} \right)\Big \rangle  \\ &=& 
\displaystyle \rot - \frac{n\rot\,\tb}{n\tb + 1} = \frac{\rot}{n\tb + 1}.
\end{array}
\end{equation}
\end{center}

Since $L$ is a Legendrian knot in tight contact $3$-manifold, Bennequin's inequality $\tb(L) + |\rot(L)| \leq -\chi(L)$ holds for $L$.

If $\tb(L) = \tb < 0$ and $\rot(L)=\rot > -\chi(L) \geq 0$, then $\tb(L) + |\rot(L)| = \tb(L) + \rot(L) \leq -\chi(L)$ or $-\tb(L) \geq \chi(L) + \rot(L)$.

We compute 
\begin{equation}
\begin{array}{ccl}
\displaystyle \tb_\mathbb{Q}(L^*) + |\rot_\mathbb{Q}(L^*)| \smallskip &=& \displaystyle  \frac{\tb}{n\tb+1} + | \frac{\rot}{n\tb+1} | \nonumber \bigskip \\ &=& \displaystyle  \frac{-\tb}{|n\tb+1|} +  \frac{\rot}{|n\tb+1|} \bigskip \\  &\geq & \displaystyle \frac{\chi(L) +2\rot }{|n\tb+1|} \geq  \frac{-\chi(L)}{|n\tb+1|}.  
\end{array}
\end{equation}
since $\rot(L)=\rot > -\chi(L)$. So the surgery dual knot $L^*$ violates a generalization of Bennequin's inequality. We conclude that $L^*$ is a knot in an overtwisted contact $3$-manifold.
\end{proof}

\begin{proof}[Proof of Theorem ~ \ref{thm2}] Let $L$ be a nullhomologous Legendrian knot in an overtwisted contact $3$-manifold. Likewise, contact $(+1/n)$--surgery along $L$ is equivalent to contact $(+1)$--surgeries along $n$ successive push-offs of $L$ as before. Contact $(+1)$--surgery along a Legendrian knot in an overtwisted contact $3$-manifold is always overtwisted. If it were tight, then contact $(-1)$--surgery along the surgery dual knot in the resulting tight contact $3$-manifold would give back the overtwisted contact $3$-manifold and that contradicts \cite{Wand}. Then contact $(+1)$--surgeries along the push offs of $L$ are overtwisted and hence contact $(+1/n)$--surgery along $L$ is overtwisted.
\end{proof}

\begin{lemma} Let $L$ be a Legendrian knot in a contact $3$--manifold $(M, \xi)$. If contact $(+1)$--surgery along $L$ is tight, then for relatively prime integers $p, q > 0$ and $q - p< 0$, contact $(+p/q)$--surgery along $L$ is tight.  
\label{ntight}
\end{lemma}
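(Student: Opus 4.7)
The plan is to decompose contact $(+p/q)$--surgery along $L$ into contact $(+1)$--surgery along $L$ followed by a sequence of Legendrian (contact $(-1)$) surgeries on Legendrian knots in the intermediate contact $3$--manifolds, and then to propagate tightness by invoking Wand's theorem \cite{Wand} at every step.

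First I would apply the contact slam-dunk of Ding--Geiges--Stipsicz \cite{DGS}: contact $(+p/q)$--surgery along $L$ is equivalent to contact $(+1)$--surgery along $L$ followed by a contact $r$--surgery on the surgery dual knot $L^*$ in the resulting contact $3$--manifold. The topological slam-dunk forces $r$ to satisfy $1 - 1/r = p/q$, yielding $r = -q/(p-q)$; the hypothesis $q - p < 0$ is precisely what guarantees $r < 0$.

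Next I would decompose this negative rational contact $r$--surgery on $L^*$ into a sequence of contact $(-1)$--surgeries on Legendrian knots (meridians, push-offs, and their stabilizations) via the Ding--Geiges--Stipsicz algorithm. For $r \leq -1$, the standard negative continued fraction expansion of $r$ with entries $\leq -2$ applies, and each integer $(-k)$--surgery ($k \geq 1$) is realized as a contact $(-1)$--surgery on a $(k-1)$--fold stabilization of a Legendrian representative. For $r \in (-1,0)$, one further contact slam-dunk with coefficient $-1$ reduces to the previous case, since the partner coefficient $1/(-1-r)$ then lies in $(-\infty, -1)$. In either case, the decomposition following the initial contact $(+1)$--surgery on $L$ consists entirely of Legendrian surgeries.

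Finally, by hypothesis the contact $3$--manifold obtained from the contact $(+1)$--surgery along $L$ is tight. Since Wand's theorem \cite{Wand} asserts that Legendrian surgery preserves tightness, an induction along the sequence of contact $(-1)$--surgeries shows that every intermediate contact $3$--manifold remains tight; the terminal one realizes contact $(+p/q)$--surgery along $L$, which is therefore tight. The main technical obstacle I anticipate is verifying the contact slam-dunk rigorously at the level of contact structures, i.e., ensuring the auxiliary Legendrian representatives can be arranged with the required contact framings so that the composition of contact surgeries really does realize the contact $(+p/q)$--surgery; this is standard in \cite{DGS}, and once in place the tightness conclusion follows automatically from Wand's theorem.
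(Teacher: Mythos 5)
Your proposal follows essentially the same route as the paper: split off a contact $(+1)$--surgery on $L$, observe that the residual surgery is a negative contact surgery, unravel it into a sequence of contact $(-1)$--surgeries via the Ding--Geiges--Stipsicz algorithm, and conclude with Wand's theorem. One bookkeeping point deserves attention: the paper (quoting \cite{DGS}) writes the residual operation as a contact $(-\frac{p}{p-q})$--surgery on a Legendrian push-off of $L$, whereas your slam-dunk relation $1-1/r = p/q$ yields $r = -q/(p-q)$. The discrepancy arises because your $r$ is a topological coefficient measured in the meridian's Seifert framing, not the contact surgery coefficient of the Legendrian dual knot; in general the sign of a topological coefficient does not determine the sign of the corresponding contact coefficient (already for $p/q = 2$ your formula gives $-1$ while the correct contact coefficient on the push-off is $-2$), so as written the step "hence the remaining surgery is a negative contact surgery" is not fully justified. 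The slip is harmless here only because the correct contact coefficient $-\frac{p}{p-q}$ is itself $<-1$ whenever $p>q>0$, so the decomposition into Legendrian surgeries and the inductive application of \cite{Wand} go through exactly as you describe; rephrasing the first reduction as the replacement lemma of \cite{DGS} on the push-off, as the paper does, removes the issue.
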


In particular, if contact $(+1)$--surgery along $L$ is tight, then contact $(+n)$--surgery along $L$ is tight for any integer $n \geq 2$.

\begin{proof} By Ding-Geiges-Stipsicz \cite{DGS}, contact $(+p/q)$--surgery along $L$ is equivalent to contact $(+1)$--surgery along $L$ and a contact $(-\frac{p}{p-q})$--surgery along its push off, say $L'$. By \cite{DGS} again, contact  $(-\frac{p}{p-q})$--surgery along $L'$ is equivalent to a sequence of $(-1)$--surgeries along push off's of $L'$ with some additional zigzags. If contact $(+1)$--surgery along $L$ is tight, then by \cite{Wand} remaining $(-1)$--surgeries result in a tight contact $3$-manifold.
\end{proof}

\begin{proof}[Proof of Theorem ~ \ref{thmcountereg}] Let $L$ be a Legendrian knot in Figure \ref{counterexample}. By \cite{DGS}, a single contact $(+1)$--surgery along a standard Legendrian unknot produces the unique tight and Stein fillable contact structure on $S^1 \times S^2$, and the further two contact $(-1)$--surgeries in Figure \ref{counterexample} then produce a Stein fillable and hence tight contact structure. Thus, contact $(+1)$--surgery along $L$ is tight. Then, by Lemma ~ \ref{ntight} contact $(+n)$--surgery along $L$ is tight for any $n \geq 2$.

\begin{figure}[h]
\centering
\includegraphics[width=2in]{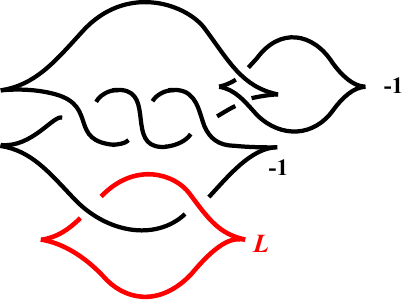}
\caption{Legendrian knot $L$ has $\tb=-3$ in the surgered manifold.}
\label{counterexample}
\end{figure}
 
Now, we check that $\tb(L)=-3$. For this we may use the $\tb$ formula from the proof of Theorem ~\ref{thm1}. Consider the linking matrix $\mathbf{M}$ and the extended linking matrix

\begin{displaymath}
\mathbf{M} =
\left( \begin{array}{cc}
\phantom{-}0 & -1 \\
-1 & -2
\end{array} \right), \; \;
\mathbf{M_0} =
\left( \begin{array}{ccc}
\phantom{-}0 & -1 & \phantom{-}0 \\
-1 & \phantom{-}0 & -1 \\
\phantom{-}0 & -1 & -2
\end{array} \right).
\end{displaymath}  

The Thurston-Bennequin invariant of $L$ in the unsurgered manifold is denoted by $\tb_0$ and here $\tb_0 = -1$. Then in the surgered manifold one has

\begin{displaymath} \tb(L) = \tb_0 + \frac{\mbox{\tt det}\mathbf{M_0}}{\mbox{\tt det}\mathbf{M}} = -1 + \frac{2}{-1} = -3. \qedhere
\end{displaymath} 
\end{proof}

\begin{remark} One can alternatively replace the knot $L$ in Figure~\ref{counterexample} by any knot having $\tb \leq -1$ in $(S^3, \xi_{std})$ where contact $(+1)$--surgery is tight. The same proof applies and may give new counterexample to Conway's conjecture. The knot $L$ in Figure~\ref{counterexample} is the simplest choice.
\end{remark}

\textbf{Acknowledgement} The author is partially supported by Turkish Academy of Sciences and T\"UB\.{I}TAK grant \#115F519.

\end{document}